\newtheorem{theorem}{Theorem}
\newtheorem{lemma}[theorem]{Lemma}
\newtheorem*{claim*}{Claim}
\newtheorem{proposition}[theorem]{Proposition}
\theoremstyle{definition}
\newtheorem{definition}[theorem]{Definition}
\newtheorem{remark}[theorem]{Remark}
\numberwithin{theorem}{section}
\begin{document}

\bibliographystyle{plain}

\title
{Asymptotically short generalizations of $t$-design curves}
\author {Ayodeji Lindblad}
\address{Department of Mathematics, Massachusetts Institute of Technology, Cambridge, Massachusetts 02139}
\email{ayodeji@mit.edu}
\maketitle

\begin{abstract}
Ehler and Gr\"{o}chenig defined spherical $t$-design curves to be curves whose associated line integrals exactly average all degree at most $t$ polynomials. These authors posed the question of finding spherical $t$-design curves $\gamma_t$ on $S^d$ of asymptotically optimal arc length $\ell(\gamma_t)\asymp t^{d-1}$ as $t\to\infty$. This work investigates analogues of this question for \emph{$\varepsilon_t$-approximate} and \emph{weighted $t$-design curves}, proving existence of such curves on $S^d$ achieving this asymptotic arc length for odd $d\in\Bbb N_+$ in the approximate setting (where $\varepsilon_t\asymp1/t$ as $t\to\infty$) and all $d\in\Bbb N_+$ in the weighted setting (where these curves have weight functions which are strictly positive at all but finitely many points). Formulas for such weighted $t$-design curves for $d\in\{2,3\}$ are presented.
\end{abstract}

\section{Background and main results}
\label{sec:intro}

Spherical $t$-designs were defined by Delsarte, Goethals, and Seidel \cite{Delsarte...77} to be finite subsets of spheres such that any polynomial of degree at most $t$ has the same average on a $t$-design as on the entire sphere. These objects are of interest for providing ``good'' finite approximations of the sphere and for their connections to other areas of combinatorics \cite{BannaiBannai09}, data analysis on the sphere, numerical analysis, and beyond. Of special interest are optimally small $t$-designs, which generally prove difficult to construct. Existence of $t$-designs on $S^d$ for all $t$ and $d$ was proven by Seymour and Zaslavsky \cite[Corollary 1]{SeymourZalavsky84}, but the size of the smallest $t$-design on $S^d$ is open in almost all cases. Delsarte, Goethals, and Seidel provided lower bounds (of asymptotic order $t^d$ as $t\to\infty$) on the sizes of spherical $t$-designs on $S^d$ for all $t,d\in\Bbb N_+$ \cite[Theorems 5.11, 5.12]{Delsarte...77}, but even the asymptotic order of size of an optimally small sequence $(X_t)_{t=1}^\infty$ of $t$-designs on $S^d$ as $t\to\infty$ for $d>1$ was a major open problem for decades. This problem was resolved when Bondarenko, Radchenko, and Viazovska showed that the lower bounds of Delsarte, Goethals, and Seidel are asymptotically optimal up to constants $C_d$ depending only on $d$, proving that for any $t\in\Bbb N_+$ and any integer $C\geq C_dt^d$, there exists a $t$-design on $S^d$ of size $C$ \cite[Theorem 1]{Bondarenko...13}.

Motivated by the applications of curves to experimental design and data analysis on the sphere through techniques such as mobile sampling \cite[Section 1]{EhlerGrochenig23}, Ehler and Gr\"ochenig introduced \emph{spherical $t$-design curves}, curves such that any polynomial of degree at most $t$ has the same average value along the curve as on the entire sphere \cite[Definition 2.1]{EhlerGrochenig23}. For any sequence $(\gamma_t)_{t=1}^\infty$ of $t$-design curves on $S^d$, Ehler and Gr\"ochenig showed that the length $\ell(\gamma_t)$ of $\gamma_t$ satisfies $\ell(\gamma_t)\succsim t^{d-1}$ as $t\to\infty$ \cite[Theorem 1.1]{EhlerGrochenig23} (i.e., the sequence $(\ell(\gamma_t)t^{1-d})_{t=1}^\infty$ is bounded away from 0). These authors called such sequences achieving asymptotic equality $t^{d-1}\succsim\ell(\gamma_t)\succsim t^{d-1}$ in this bound (written as $\ell(\gamma_t)\asymp t^{d-1}$) \emph{asymptotically optimal} and posed the problem of proving existence of asymptotically optimal sequences of $t$-design curves on $S^d$, which they solved for $d=2$ using the $d=2$ case of the aforementioned result \cite[Theorem 1]{Bondarenko...13} of Bondarenko, Radchenko, and Viazovska. The present author solved this problem for $d=3$ \cite[Theorem 1.2]{Lindblad24b}, but it remains open in all other dimensions: the shortest sequences of $t$-design curves known to exist on $S^d$ for $d>3$---which arise from applying techniques \cite[Sections 4-6]{EhlerGrochenig23} of Ehler and Gr\"ochenig to asymptotically optimal $t$-design curves on $S^3$ \cite[Theorem 1.2]{Lindblad24b}---have asymptotic order of length $t^{d(d-1)/2-1}$ (and current work in progress \cite{LindbladWIP} will extend these methods to reduce this asymptotic order of length to $t^{\lfloor d/2\rfloor(\lfloor d/2\rfloor-1)+d-1}$). This motivates the exploration of analogues of this problem concerning classes of curves which could find use in the potential data analysis and experimental design applications for which spherical $t$-design curves were introduced.

Ehler, Gr\"ochenig, and Karner recently introduced a generalization of $t$-design curves called \emph{Marcinkiewicz-Zygmund families}, which are families of curves that satisfy Marcinkiewicz-Zygmund inequalities \cite[Definition 4.1]{Ehler...25}. These authors showed that such families can achieve the asymptotic order of length $t^{d-1}$ as $t\to\infty$ of spherical $t$-design curves on $S^d$ \cite[Theorem 1.1]{Ehler...25}. We now consider further such generalizations, which we refer to as \emph{$\varepsilon$-approximate $t$-design curves} and \emph{weighted $t$-design curves}. The former are curves whose associated line integrals average all degree at most $t$ polynomials on the sphere with error at most $\varepsilon$ times their sup norm, while the latter are curves equipped with real-valued weight functions whose associated weighted line integrals exactly average all degree at most $t$ polynomials on the sphere. The analogous notion of $\varepsilon$-approximate $t$-design sets has been considered on projective tori in work \cite[Section 5, Section C]{Iosue...24} of Iosue, Mooney, Ehrenberg, and Gorshkov and on spheres in work \cite[Subsection 6.1]{Dillon24} of Dillon. Another inequivalent notion of ``approximate'' spherical $t$-design sets has also been investigated \cite{ChenZhou18}, but these objects are specifically weighted $t$-design sets with ``approximately equal'' weights. As opposed to approximate $t$-design sets, weighted $t$-design sets---and, more generally, \emph{quadrature} and \emph{cubature} formulas---have received decades of substantial study \cite{Cools97,Cools03,Gauss1814,Mysovskikh81,Sobolev74, SobolevVaskevitch96,Stroud71,Tchakaloff57}. In fact, notable progress on the problem of proving existence of asymptotically optimal $t$-design sets on spheres of dimension $d>1$ included the work \cite{Mhaskar...01} of Mhaskar, Narcowich, and Ward which showed that there exist \emph{weighted} $t$-design sets $(W_t)_{t=1}^\infty$ on $S^d$ (with almost equal weights) satisfying $|W_t|\asymp t^d$ as $t\to\infty$.

The main results of this manuscript are presented by Theorems \ref{thm:approxmain} and \ref{thm:weight}, which state that there exist $\varepsilon_t$-approximate and weighted $t$-design curves of asymptotic order of arc length $t^{d-1}$ as $t\to\infty$ on all odd-dimensional spheres in the approximate setting (where we have $\varepsilon_t\asymp1/t$ as $t\to\infty$) and on all spheres in the weighted setting (in which case these weighted $t$-design curves have weight functions which are strictly positive at all but finitely many points and have asymptotically optimal arc length among sequences (indexed in $t$) of such curves).

\begin{theorem}[Main theorem on approximate $t$-design curves]\label{thm:approxmain}
For any $d\geq3$, there exists a sequence $(\gamma_t)_{t=1}^\infty$ of $\varepsilon_t$-approximate $t$-design curves on $S^{d}$ such that, as $t\to\infty$, $\ell(\gamma_t)\asymp t^{d-1}$ for $d$ odd, $\ell(\gamma_t)\asymp t^{2d-3}$ for $d$ even, and $\varepsilon_t\asymp1/t$.
\end{theorem}

We formally introduce approximate $t$-design curves and present basic results concerning these objects in Section \ref{sec:approxprelim}, then formalize a construction (communicated by Proposition \ref{pro:const}) of approximate $t$-design curves on $S^{2n+1}$ from $\lfloor t/2\rfloor$-design sets on $\Bbb{CP}^n$ in Section \ref{sec:const}. In Section \ref{sec:asymp}, we first prove the cases of Theorem \ref{thm:approxmain} when $d$ is odd by combining this construction with asymptotic bounds \cite{Breger...18,Etayo...18} concerning $t$-design sets on the complex projective space $\Bbb{CP}^n$ as $t\to\infty$. We then prove the cases of Theorem \ref{thm:approxmain} when $d$ is even from the cases of the theorem when $d$ is odd by applying the natural approximate analogue of a construction \cite[Sections 4-6]{EhlerGrochenig23} of Ehler and Gr\"ochenig.


\begin{theorem}[Main theorem on weighted $t$-design curves]\label{thm:weight}
For any $d\geq2$, there exists a constant $W_d>0$ such that for any $t\in\Bbb N_+$ and $W\geq W_dt^{d-1}$, there exists a weighted $t$-design curve on $S^d$ of length $W$ with weight function which is strictly positive at all but finitely many points. For example, fix any $t\in\Bbb N_+$. Writing
\begin{equation}\label{eq:explicitweight2}
\begin{gathered}
\alpha=(\alpha_1,\alpha_2,\alpha_3):[0,1]\to S^2, \\
\alpha_1(s)=\sqrt{1-\alpha_3(s)^2}\cos\left(\frac{\pi\lfloor2ts\rfloor}{t}+\frac{\pi}{2t}\right), \\
\alpha_2(s)=\sqrt{1-\alpha_3(s)^2}\sin\left(\frac{\pi\lfloor 2ts\rfloor}t+\frac\pi{2t}\right), \\
\alpha_3(s)=(-1)^{\lfloor2ts\rfloor}(4ts-2\lfloor2ts\rfloor-1),
\end{gathered}
\end{equation}
$\alpha$ (paired with the weight function $1/|\alpha'|$) will be a weighted $(2t-1)$-design curve on $S^2$ of length $2\pi t$. For any continuous map $\theta:[0,1]\to\Bbb R$ which is smooth on the complement of a finite subset of $[0,1]$, whose derivative is $L^1$-integrable, and which satisfies $\theta(1)-\theta(0)\in2\pi\Bbb Z$, writing
\begin{equation}\label{eq:explicitweight3}
\begin{gathered}
\gamma:[0,1]\to S^3\subset\Bbb C^2, \\
s\mapsto\frac1{\sqrt2}\left(\sqrt{1+\alpha_{1}(r)},\frac {\alpha_2(r)-i\alpha_3(r)}{\sqrt{1+\alpha_{1}(r)}}\right)e^{2\pi is(2t+1)+i\theta(r)}
\end{gathered}
\end{equation}
for $r:=4ts-\lfloor 4ts\rfloor$, $\gamma$ (paired with the weight function $1/|\gamma'|$) will be a weighted $(4t-1)$-design curve on $S^3\subset\Bbb C^2$ of length at least $2\pi\sqrt{4t^4+1}$ (with equality for some $\theta$).
\end{theorem}

After formally introducing weighted $t$-design curves in Section \ref{sec:weight}, we present a construction (communicated by Proposition \ref{pro:weightconst}) of weighted $t$-design curves on $S^d$ from weighted $t$-design sets on $S^{d-1}$ in Subsection \ref{sub:buildweight}. In Subsection \ref{sub:lifting}, we then present the natural weighted analogue of the construction \cite[Theorem 1.3]{Lindblad24b} of $t$-design curves on $S^3$ from $\lfloor t/2\rfloor$-design curves on $S^2$ formalized by the present author in the unweighted setting. In Subsection \ref{sub:weightasymp}, we combine these constructions with the fact that the vertices of a $(t+1)$-gon give a $t$-design on $S^1$ to verify the cases of Theorem \ref{thm:weight} when $d\in\{2,3\}$. We then combine the former construction with an existence result \cite[Theorem 1]{Bondarenko...13} for $t$-design sets on $S^{d-1}$ of asymptotic order of size $t^{d-1}$ as $t\to\infty$ to verify the theorem for general $d\in\Bbb N_+$. Note that Proposition \ref{pro:weightEG} (a direct analogue of a result \cite[Theorem 1.1]{EhlerGrochenig23} of Ehler and Gr\"ochenig) observes that the curves of interest in the theorem (for $W=W_dt^{d-1}$) will be asymptotically optimal, in the sense that any sequence $(\gamma_t,w_t)_{t=1}^\infty$ of weighted $t$-design curves on $S^d$ with non-negative weights $w_t\geq0$ must satisfy $\ell(\gamma_t)\succsim t^{d-1}$ as $t\to\infty$. After noting this, we conclude by combining the former construction with work \cite[Theorem 1.6]{Dillon24} of Dillon to present a result (communicated by Proposition \ref{pro:weightstrength}) analogous to Theorem \ref{thm:weight} concerning asymptotic lengths of weighted $t$-design curves on $S^d$ as $d\to\infty$ (rather than as $t\to\infty$).


\section{Approximate $t$-design curves}
\label{sec:approxprelim}

Take $P_t(S^d)$ to be the space of restrictions to $S^d$ of real-valued polynomials on $\Bbb R^{d+1}$ of degree at most $t$, where such a polynomial is an element of the span over $\Bbb R$ of products of at most $t$ coordinate functions $\Bbb R^{d+1}\to\Bbb R$. Also take $\sigma$ to be the uniform measure (meaning a measure on a metric space with respect to which any two balls of the same radius have the same measure) on $S^d$, normalized so that $\sigma(S^d)=1$. We now formally introduce approximate $t$-design curves:

\begin{definition}\label{def:design}
Consider $d\in\Bbb N_+:=\{1,2,\dots\}$ and $t\in\Bbb N:=\{0,1,\dots\}$. For any $\varepsilon\geq0$ and $c>0$, we say that a continuous, piecewise smooth, closed curve $\gamma:[0,1]\to S^d$ with finitely many self-intersections is an \emph{$(\varepsilon,c)$-approximate $t$-design curve} if 
\[\left|c\int_\gamma f-\int_{S^d}f\,d\sigma\right|\leq\varepsilon\sup_{S^d}|f|\]
for all $f\in P_t(S^d)$, where
\[\int_\gamma f:=\int_0^1f(\gamma(s))|\gamma'(s)|\,ds.\]
If $c=1/\ell(\gamma)$ (where $\ell(\gamma):=\int_\gamma1$ is the length of $\gamma$), we say that $\gamma$ is an \emph{$\varepsilon$-approximate $t$-design curve}.
\end{definition}

We now prove basic facts about $(\varepsilon,c)$-approximate $t$-design curves. To this end, fix any $d\in\Bbb N_+$, $t\in\Bbb N$, $\varepsilon\geq0$, and $c>0$.
The first observation we find it natural to make about $(\varepsilon,c)$-approximate $t$-design curves is that any continuous, piecewise smooth curve $\gamma:[0,1]\to S^d$ is a $(c\ell(\gamma)+1,c)$-approximate $t$-design curve on $S^d$ for any $c>0$, while 0-approximate $t$-design curves are exactly $t$-design curves as defined by Ehler and Gr\"ochenig \cite[Definition 2.1]{EhlerGrochenig23}. Additionally, we may observe taking $f=1$ in Definition \ref{def:design} that there are no $(\varepsilon,c)$-approximate $t$-design curves $\gamma$ for 
\[c\not\in[(1-\varepsilon)/\ell(\gamma),(1+\varepsilon)/\ell(\gamma)].\]
Expanding on the relationship between the error term $\varepsilon$ and scaling constant $c$ of an $(\varepsilon,c)$-approximate $t$-design curve, we present Remark \ref{rmk:changec}: 

\begin{remark}\label{rmk:changec}
Consider any $\widetilde c>0$ alongside an $(\varepsilon,c)$-approximate $t$-design curve $\gamma$ on $S^d$. For any $f\in P_t(S^d)$, we have
\[\left|\widetilde c\int_{\gamma}f-\int_{S^d}f\,d\sigma\right|\leq\varepsilon\sup_{S^d}|f|+\left|(\widetilde c-c)\int_\gamma f\right|\leq(\varepsilon+|\widetilde c-c|\ell(\gamma))\sup_{S^d}|f|.\]
Thus, $\gamma$ is also an $(\varepsilon+|\widetilde c-c|\ell(\gamma),\widetilde c)$-approximate $t$-design curve.
\end{remark}

It may also be of interest to observe that any $(\varepsilon,c)$-approximate $t$-design curve on $S^d$ may be perturbed to be smooth and, if $d\neq2$, simple, as is communicated by Remark \ref{rmk:smooth}:

\begin{remark}\label{rmk:smooth}
Consider an $(\varepsilon,c)$-approximate $t$-design curve $\gamma$ on $S^d$ (where we here take $d\geq2$). For any $\delta>0$, there exists a smooth $(\varepsilon+2\delta c,c)$-approximate
$t$-design curve $\widetilde\gamma$ on $S^d$ of length $\ell(\widetilde\gamma)=\ell(\gamma)+\delta$. If $d\neq2$, we may take $\widetilde\gamma$ to be simple.
\end{remark}

The remark then allows us to take the curves of interest in Theorem \ref{thm:approxmain} (which are, as constructed, simple geodesic cycles, meaning that they are closed curves which connect finitely many points along geodesics) to be smooth if desired. Finally, we present the fact relevant to the proof of this theorem that a continuous, piecewise smooth, closed curve whose image partially coincides with a piecewise smooth topological 1-manifold which exactly averages polynomials of degree at most $t$ on $S^d$ will be an $(\varepsilon,c)$-approximate $t$-design curve for $\varepsilon$ and $c$ which depend on how closely the image of the curve coincides with the manifold.

\begin{lemma}\label{lem:overlap}
Consider a piecewise smooth topological 1-manifold $L\subset S^d$ satisfying
\[\left|c\int_L f\,ds-\int_{S^d}f\,d\sigma\right|\leq\varepsilon\sup_{S^d}|f|\]
and a continuous, piecewise smooth, closed curve $\gamma:[0,1]\to S^d$ with at most finitely many self-intersections, where $s$ is the arc length measure on $L$. Defining 
\[l:=\gamma^{-1}(\gamma([0,1])\cap L),\quad\varepsilon_\gamma:=\varepsilon+c(s(L)+\ell(\gamma)-2\ell(\gamma|_l)),\]
$\gamma$ is an $(\varepsilon_\gamma,c)$-approximate $t$-design curve.
\end{lemma}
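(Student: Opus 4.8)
The plan is to compare $\gamma$ directly with $L$ through a triangle inequality, absorbing the mismatch into the error term by means of the overlap data. Writing $M:=\bigl|\sup_{S^d}f\bigr|$, so that $|f|\le M$ on $S^d$, I first split
\[
\Bigl|c\int_\gamma f-\int_{S^d}f\,d\sigma\Bigr|\le\Bigl|c\int_L f\,ds-\int_{S^d}f\,d\sigma\Bigr|+c\,\Bigl|\int_\gamma f-\int_L f\,ds\Bigr|.
\]
The first summand is at most $\varepsilon M$ by the hypothesis on $L$, so the whole statement reduces to bounding the \emph{overlap discrepancy} $\bigl|\int_\gamma f-\int_L f\,ds\bigr|$ by $(|L|+\ell(\gamma)-2\ell(\gamma|_l))\,M$; collecting this with the first summand then yields precisely the coefficient $\varepsilon_\gamma=\varepsilon+c(|L|+\ell(\gamma)-2\ell(\gamma|_l))$.

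To bound the overlap discrepancy, I would set $A:=\gamma([0,1])\cap L$ and $l^{c}:=[0,1]\setminus l$, and decompose each integral along the overlap as $\int_\gamma f=\int_{\gamma|_l}f+\int_{\gamma|_{l^{c}}}f$ and $\int_L f\,ds=\int_A f\,ds+\int_{L\setminus A}f\,ds$. The key observation is that the contributions over the overlap cancel: because $\gamma|_l$ is precisely the part of $\gamma$ lying on $L$ and sweeps out $A$ exactly once, it is an arc-length reparametrization of the piecewise smooth arc $A\subseteq L$, whence $\int_{\gamma|_l}f=\int_A f\,ds$ and $\ell(\gamma|_l)=s(A)$ (arc-length integrals being insensitive to orientation and reparametrization). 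After this cancellation only the off-overlap pieces survive, and estimating each integrand by $M$ gives
\[
\Bigl|\int_\gamma f-\int_L f\,ds\Bigr|\le\ell(\gamma|_{l^{c}})\,M+s(L\setminus A)\,M=\bigl((\ell(\gamma)-\ell(\gamma|_l))+(|L|-\ell(\gamma|_l))\bigr)M,
\]
which is exactly $(|L|+\ell(\gamma)-2\ell(\gamma|_l))M$ upon substituting $s(A)=\ell(\gamma|_l)$.

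The triangle inequality and the pointwise bound $|f|\le M$ are routine; the step demanding genuine care, and the one I expect to be the crux, is the bookkeeping on the overlap that underwrites the two identities $\int_{\gamma|_l}f=\int_A f\,ds$ and $\ell(\gamma|_l)=s(A)$. Making these precise requires the piecewise-smooth and finite-self-intersection hypotheses on $\gamma$, together with the fact---automatic for the simple cycles produced by Theorem \ref{thm:const}---that $\gamma$ traverses the overlap with unit multiplicity; were $\gamma$ instead to retrace part of $A$, one would have $\ell(\gamma|_l)>s(A)$ and the clean form of $\varepsilon_\gamma$ would have to be replaced by one involving $s(A)$. I would therefore isolate these two identities as the sole nontrivial input, reducing the remainder of the argument to the elementary estimates above.
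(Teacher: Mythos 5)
Your proposal is correct and takes essentially the same approach as the paper: the same overlap decomposition $\int_L f\,ds=\int_{\gamma|_l}f+\int_{L\setminus\gamma(l)}f\,ds$ with cancellation over $\gamma(l)$ and sup-norm-times-length bounds on the two off-overlap pieces, the only cosmetic difference being that you apply the triangle inequality forward while the paper rearranges via the reverse triangle inequality. The unit-multiplicity identity $\ell(\gamma|_l)=s(\gamma(l))$ that you rightly flag as the crux is used implicitly in the paper as well, and is guaranteed by the standing hypothesis that $\gamma$ has finitely many self-intersections (retracing a positive-length subarc would produce infinitely many).
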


\begin{proof}
For any $f\in P_t(S^d)$ and taking $s$ to be the arc length measure on $L$, we have
\begin{equation*}
\begin{split}
\int_L f\,ds&=\int_{\gamma|_l}f+\int_{L\setminus\gamma(l)}f\,ds+\int_{\gamma|_{[0,1]\setminus l}}f-\int_{\gamma|_{[0,1]\setminus l}}f \\
&=\int_\gamma f+\int_{L\setminus\gamma(l)}f\,ds-\int_{\gamma|_{[0,1]\setminus l}}f.
\end{split}
\end{equation*}
Therefore, we can see that
\begin{equation*}
\begin{split}
\varepsilon\sup_{S^d}|f|&\geq\left|c\int_L f\,ds-\int_{S^d}f\,d\sigma\right| \\
&=\left|c\left(\int_\gamma f+\int_{L\setminus\gamma(l)}f\,ds-\int_{\gamma|_{[0,1]\setminus l}}f\right)-\int_{S^d}f\,d\sigma\right| \\
&\geq\left|\left|c\int_\gamma f-\int_{S^d}f\,d\sigma\right|-c\left|\int_{L\setminus\gamma(l)}f\,ds-\int_{\gamma|_{[0,1]\setminus l}}f\right|\right|,
\end{split}
\end{equation*}
so we have 
\begin{equation*}
\begin{split}
\left|c\int_\gamma f-\int_{S^d}f\,d\sigma\right|&\leq \varepsilon\sup_{S^d}|f|+c\left|\int_{L\setminus\gamma(l)}f\,ds-\int_{\gamma|_{[0,1]\setminus l}}f\right| \\
&\leq(\varepsilon + c(s(L)+\ell(\gamma)-2\ell(\gamma|_l)))\sup_{S^d}|f|.
\end{split}
\end{equation*}
This is exactly the desired result.
\end{proof}

\section{Building approximate $t$-design curves}
\label{sec:const}

In this section, we prove Proposition \ref{pro:const}, which allows us to build an approximate $t$-design curve on $S^{2n+1}$ from a $\lfloor t/2\rfloor$-design set on $\Bbb{CP}^n$.

\begin{proposition}\label{pro:const}
Fixing $n\in\Bbb N_+$, $t\geq2$, and a $\lfloor t/2\rfloor$-design set $Y$ on $\Bbb{CP}^n$, consider the constants 
\[W_Y\in\left(0,2(|Y|-1)\sup_{y_1,y_2\in Y}d_{\Bbb{CP}^n}(y_1,y_2)\right],\quad M_Y>0\] 
as in Subsection \ref{sub:connecting}. For any $\delta\in(0,M_Y)$, we may construct a simple $\left(\frac{W_Y+\delta}{2\pi|Y|},\frac1{2\pi|Y|}\right)$-approximate $t$-design curve $\gamma_Y$ on $S^{2n+1}$ of length $\ell(\gamma_Y)=2\pi|Y|+W_Y-\delta$. We may take $\gamma_Y$ to be a geodesic cycle or smooth.
\end{proposition}


We introduce complex projective $t$-design sets and discuss how they can be used to average degree at most $2t+1$ spherical polynomials in Subsection \ref{sub:sphavg}, then describe the construction of Proposition \ref{pro:const} and use the understanding of complex projective $t$-design sets discussed in Subsection \ref{sub:sphavg} to verify the theorem in Subsection \ref{sub:connecting}. 

\subsection{Complex projective $t$-design sets}
\label{sub:sphavg}

In this subsection, we first introduce complex projective $t$-design sets, then discuss how these objects can be used to average degree at most $2t+1$ polynomials on spheres. To this end, we define the \emph{complex projective space} $\Bbb{CP}^n$ of complex dimension $n$ to be the quotient of $S^{2n+1}$ by the multiplicative action $a\mapsto az$ of $S^1\subset\Bbb C$ on the $(n+1)$-dimensional complex vector space $\Bbb{C}^{n+1}\supset S^{2n+1}$ and equip $\Bbb{CP}^n$ with its uniform measure $\rho$, normalized so that $\rho(\Bbb{CP}^n)=1$. We also consider the \emph{complex projective map}
\begin{equation}\label{eq:cplxmap}
\Pi:S^{2n+1}\to\Bbb{CP}^n,\quad\omega\mapsto[\omega]:=\{\omega z\:|\:z\in S^1\}
\end{equation}
(which we note pushes forward the uniform spherical measure $\sigma$ to $\rho$) and denote by
\begin{equation}\label{eq:PtCPn}
	P_t(\Bbb{CP}^n):=\{f:\Bbb{CP}^n\to\Bbb R\:|\:\Pi^*f\in P_{2t}(S^{2n+1})\}
\end{equation}
the space of polynomials of degree at most $t$ on $\Bbb{CP}^n$. We then say a finite subset $Y\subset\Bbb{CP}^n$ is a \emph{complex projective $t$-design set} on $\Bbb{CP}^n$ if 
\[\frac1{|Y|}\sum_{y\in Y}g(y)=\int_{\Bbb{CP}^n}g\,d\rho\]
for all $g\in P_t(\Bbb{CP}^n)$. Lemma \ref{lem:cptd} describes how complex projective $t$-design sets average spherical polynomials:

\begin{lemma}\label{lem:cptd}
	Consider a $t$-design set $Y$ on $\Bbb{CP}^n$. For any $f\in P_{2t+1}(S^{2n+1})$, we have
	\[\frac1{2\pi|Y|}\int_{\Pi^{-1}(Y)}f\,ds=\int_{S^{2n+1}}f\,d\sigma,\]
	where $s$ is the arc length measure on $\Pi^{-1}(Y)$.
\end{lemma}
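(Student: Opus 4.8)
The plan is to decompose the level set $\Pi^{-1}(Y)$ into Hopf fibers, recognize each fiber integral as a fiber average, and then invoke the complex projective design property after verifying that the averaged function is a genuine degree at most $t$ polynomial on $\Bbb{CP}^n$.

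First I would note that $\Pi^{-1}(Y)=\bigsqcup_{y\in Y}\Pi^{-1}(y)$ is a disjoint union of $|Y|$ Hopf fibers, where each fiber $\Pi^{-1}(y)=\{\omega_y e^{i\theta}:\theta\in[0,2\pi)\}$ (for any chosen lift $\omega_y$ of $y$) is a unit great circle of length $2\pi$, parametrized with unit speed by $\theta$. Hence $ds=d\theta$ along each fiber and
\[\frac1{2\pi|Y|}\int_{\Pi^{-1}(Y)}f\,ds=\frac1{|Y|}\sum_{y\in Y}\left(\frac1{2\pi}\int_0^{2\pi}f(\omega_y e^{i\theta})\,d\theta\right)=\frac1{|Y|}\sum_{y\in Y}(Af)(\omega_y),\]
where $Af(\omega):=\frac1{2\pi}\int_0^{2\pi}f(\omega e^{i\theta})\,d\theta$ is the average of $f$ over the Hopf fiber through $\omega$. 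Since $Af$ is manifestly invariant under the $S^1$-action, it descends to a well-defined function $g$ on $\Bbb{CP}^n$ with $\Pi^*g=Af$, and so $(Af)(\omega_y)=g(y)$ independently of the chosen lift; the displayed quantity equals $\frac1{|Y|}\sum_{y\in Y}g(y)$.

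The crux, and the step I expect to be the main obstacle, is verifying that $g\in P_t(\Bbb{CP}^n)$, i.e. that $Af\in P_{2t}(S^{2n+1})$; this is exactly where the hypothesis $f\in P_{2t+1}$ (as opposed to $P_{2t+2}$) is used. Writing $f$ in complex coordinates $(\omega_0,\dots,\omega_n)$ on $\Bbb C^{n+1}$ as a real-linear combination of monomials $\prod_j\omega_j^{a_j}\overline{\omega_j}^{b_j}$, the scalar action $\omega\mapsto\omega e^{i\theta}$ multiplies such a monomial by $e^{i\theta(\sum_j a_j-\sum_j b_j)}$. Averaging over $\theta$ therefore annihilates every monomial with $\sum_j a_j\neq\sum_j b_j$ and fixes every \emph{balanced} monomial with $\sum_j a_j=\sum_j b_j=:p$; thus $Af$ is precisely the sum of the balanced monomials occurring in $f$. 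Each balanced monomial has \emph{even} total degree $2p$, so since $f$ has degree at most $2t+1$ every balanced monomial occurring in it satisfies $2p\leq 2t$ (an even integer at most $2t+1$). Consequently $Af$ is a polynomial of degree at most $2t$, so $Af\in P_{2t}(S^{2n+1})$ and $g\in P_t(\Bbb{CP}^n)$ by the definition \eqref{eq:PtCPn}. The design property of $Y$ then gives $\frac1{|Y|}\sum_{y\in Y}g(y)=\int_{\Bbb{CP}^n}g\,d\rho$.

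Finally I would close the loop with two routine measure computations. Using that $\Pi$ pushes $\sigma$ forward to $\rho$, we get $\int_{\Bbb{CP}^n}g\,d\rho=\int_{S^{2n+1}}\Pi^*g\,d\sigma=\int_{S^{2n+1}}Af\,d\sigma$. Then, because $\sigma$ is invariant under each rotation $\omega\mapsto\omega e^{i\theta}\in U(n+1)\subset SO(2n+2)$, Fubini gives
\[\int_{S^{2n+1}}Af\,d\sigma=\frac1{2\pi}\int_0^{2\pi}\int_{S^{2n+1}}f(\omega e^{i\theta})\,d\sigma(\omega)\,d\theta=\int_{S^{2n+1}}f\,d\sigma.\]
Chaining these equalities yields $\frac1{2\pi|Y|}\int_{\Pi^{-1}(Y)}f\,ds=\int_{S^{2n+1}}f\,d\sigma$, as desired.
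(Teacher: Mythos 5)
Your proof is correct, but note that the paper does not actually supply a proof of Lemma \ref{lem:cptd}: it defers entirely to external references, stating that a proof ``can be found by appealing to'' K\"onig \cite[Corollary 1]{Koning98} or the methods of that work, of Kuperberg \cite[Theorem 4.1]{Kuperberg06}, or of \cite[Theorem 1.1]{Lindblad24}. What you have written is a self-contained reconstruction of exactly the fiber-averaging argument underlying those references: decompose $\Pi^{-1}(Y)$ into unit-speed great-circle fibers of length $2\pi$, observe that the fiber average $Af$ is $S^1$-invariant and so descends to a function $g$ with $\Pi^*g=Af$, verify $g\in P_t(\Bbb{CP}^n)$, apply the design property of $Y$, and close with the pushforward identity $\Pi_*\sigma=\rho$ and the rotation invariance of $\sigma$. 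You also correctly isolate and execute the one genuinely nontrivial step---the parity argument showing that averaging over the $S^1$-action annihilates all unbalanced monomials $\prod_j\omega_j^{a_j}\overline{\omega_j}^{b_j}$ and that the surviving balanced monomials have even total degree, so that degree at most $2t+1$ improves for free to degree at most $2t$; this is precisely why the lemma holds for $P_{2t+1}(S^{2n+1})$ and not merely $P_{2t}(S^{2n+1})$, and it matches the mechanism in the cited works (and the computation in the paper's own proof of Lemma \ref{lem:poly}, where the fiber integral of $\omega^{\alpha,\beta}$ vanishes unless $\sum_i(\alpha_i-\beta_i)=0$). Two cosmetic points: the coefficients in the monomial expansion of a real polynomial in $\omega_j,\overline{\omega_j}$ are in general complex (conjugate-symmetric because $f$ is real), not real as you say, though this changes nothing in the averaging computation; and since restriction to $S^{2n+1}$ is not injective on polynomials, you should fix a degree-$(2t+1)$ polynomial representative of $f$ before expanding---your argument implicitly does so, and any choice works because the $S^1$-action preserves the sphere.
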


\begin{figure}
\begin{center}
\includegraphics[width=.8\textwidth]
{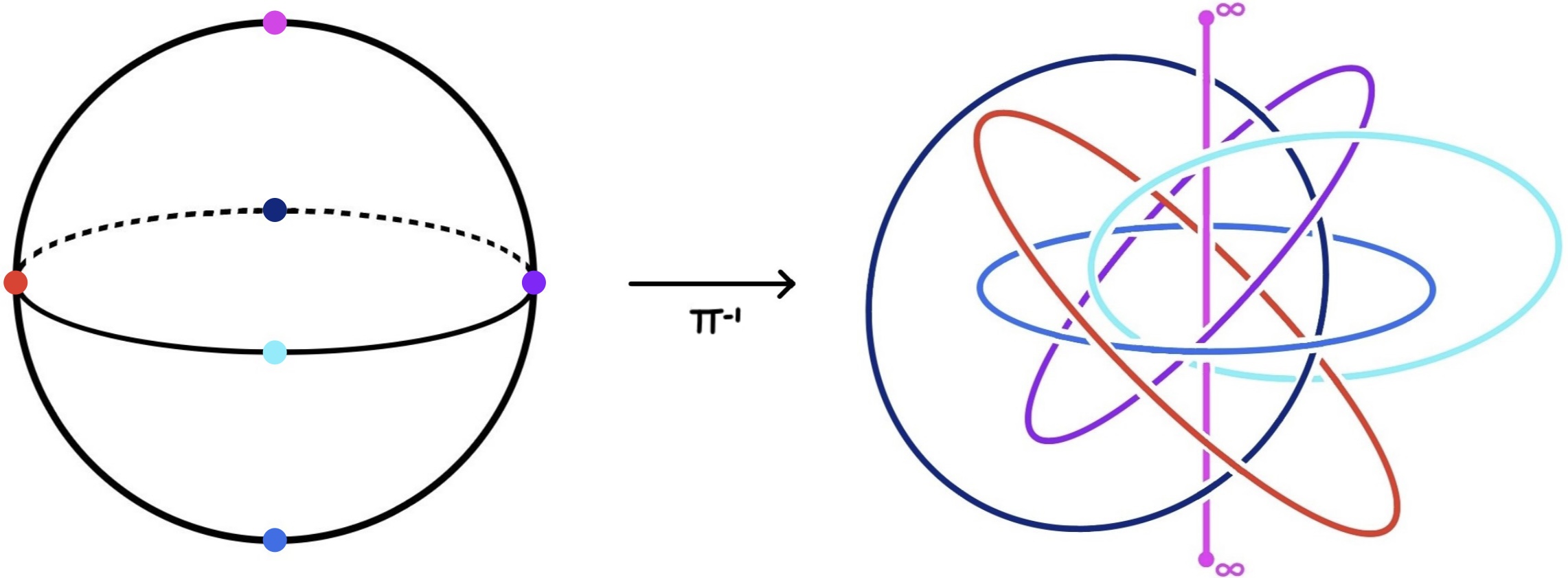}
\caption{\label{fig:TakingPreimage}
The preimage $\Pi^{-1}(O)\subset S^3\cong\Bbb R^3\cup\{\infty\}$ under $\Pi$ of the 3-design set on $\Bbb{CP}^1$ corresponding to the vertices $O$ of an octahedron under the association $\Bbb{CP}^1\cong S^2$ \cite[Example 2.7]{BannaiBannai09}. The average of any degree at most 7 polynomial on $\Pi^{-1}(O)$ equals the average of this polynomial on $S^3$.}
\end{center}
\end{figure}

Lemma \ref{lem:cptd} makes the fundamental observation behind the construction which uses the complex projective map to build a family of $t$-design sets on $S^{2n+1}$ from a $\lfloor t/2\rfloor$-design set on $\Bbb{CP}^n$ introduced by K\"onig \cite[Corollary 1]{Koning98} and also investigated by Kuperberg \cite[Theorem 4.1]{Kuperberg06}. This phenomenon was further investigated by Okuda \cite[Theorem 1.1]{Okuda15}, who---inspired by the independent observation of the construction by Cohn, Conway, Elkies, and Kumar \cite[Section 4]{Cohn...06}---verified a related construction which uses the Hopf map to build a family of $t$-design sets on $S^3$ from a $\lfloor t/2\rfloor$-design set on $S^2\cong\Bbb{CP}^1$. This was generalized in work \cite[Theorem 1.1]{Lindblad26} of the present author, which verifies constructions that build a $t$-design set on $S^{4n+3}$ from a $\lfloor t/2\rfloor$-design set on the quaternionic projective space $\Bbb{HP}^n$ (or $S^4\cong\Bbb{HP}^1$ when $n=1$) and a $t$-design set on $S^3$ and a $t$-design set on $S^{15}$ from a $\lfloor t/2\rfloor$-design set on the octonionic projective line $\Bbb{OP}^1$ (or $S^8\cong\Bbb{OP}^1$) and a $t$-design set on $S^7$. The methods used throughout this work can be applied alongside the result analogous to Lemma \ref{lem:cptd} in these quaternionic and octonionic settings to give rise to an analogue of Proposition \ref{pro:const} where $Y$ is a $\lfloor t/2\rfloor$-design set on $\Bbb{HP}^n$ or $\Bbb{OP}^1\cong S^8$ and $\gamma_Y$ is diffeomorphic to $S^3$ or $S^7$ respectively and approximately averages degree at most $t$ polynomials in a sense similar to that of Definition \ref{def:design}. 

A proof of Lemma \ref{lem:cptd} can be found using the result \cite[Corollary 1]{Koning98} of K\"onig or by applying methods used in that work, the work \cite[Theorem 4.1]{Kuperberg06} of Kuperberg, or that of the present author \cite[Theorem 1.1]{Lindblad26}.

\subsection{Connecting fibers}
\label{sub:connecting}

In this subsection, we prove Proposition \ref{pro:const}. For any finite subset $Y\subset\Bbb{CP}^n$, we first consider a minimum spanning tree $\mathcal T_Y$ (which may be constructed using any of a number of simple algorithms \cite{Milkova07}) for the graph with vertices the elements of $Y$ and one edge for each pair $y_1,y_2\in Y$ such that there exists a minimal geodesic between $y_1$ and $y_2$ whose interior is disjoint from $Y$ which we assign a weighting equal to the length of this minimal geodesic measured with respect to the metric 
\begin{equation}\label{eq:cpndist}
d_{\Bbb{CP}^n}([\omega_1],[\omega_2]):=\inf_{z\in S^1}\arccos\langle\omega_1,z\omega_2\rangle_{\Bbb R}
\end{equation}
on $\Bbb{CP}^n$, where $\langle,\rangle_\Bbb R$ denotes the standard inner product on $\Bbb R^{2n+2}$. We then take $W_Y$ to be double the sum of all weights of edges of $\mathcal T_Y$, $N_Y$ to be the maximal number of edges connected to any vertex of $\mathcal T_Y$, and $M_Y:=2\pi(|Y|-1)/N_Y$.

\begin{figure}
\begin{center}
\includegraphics[width=.72\textwidth]
{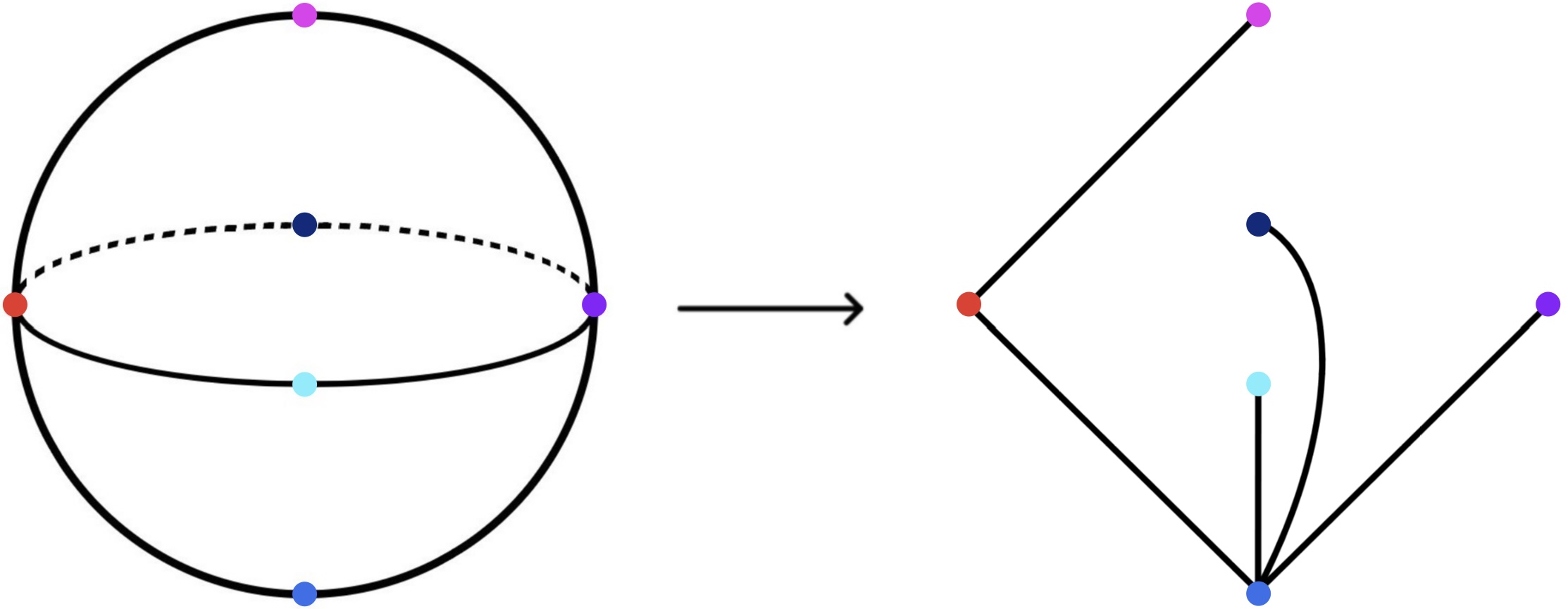}
\caption{\label{fig:MakingGraph}
Making the tree $\mathcal T_O$
as in Section \ref{sec:const}
associated to the vertices $O$ of an octahedron on $\Bbb{CP}^1\cong S^2$.}
\end{center}
\end{figure}

\begin{lemma}\label{lem:connecting}
Consider $n\in\Bbb N_+$, a finite subset $Y\subset\Bbb{CP}^n$, any $\delta\in(0,M_Y)$, and the complex projective map $\Pi:S^{2n+1}\to\Bbb{CP}^n$ as in \eqref{eq:cplxmap}. We may construct a simple geodesic cycle $\gamma_Y$ on $S^{2n+1}$ which, for $s$ the arc length measure, satisfies
\begin{gather*} \label{eq:WYprop}
s(\gamma_Y([0,1])\setminus(\gamma_Y([0,1])\cap\Pi^{-1}(Y)))=W_Y, \\
s(\Pi^{-1}(Y)\setminus(\gamma_Y([0,1])\cap\Pi^{-1}(Y)))=\delta. \label{eq:deltaprop}
\end{gather*}
\end{lemma}

\begin{figure}
\begin{center}
\includegraphics[width=.45\textwidth]
{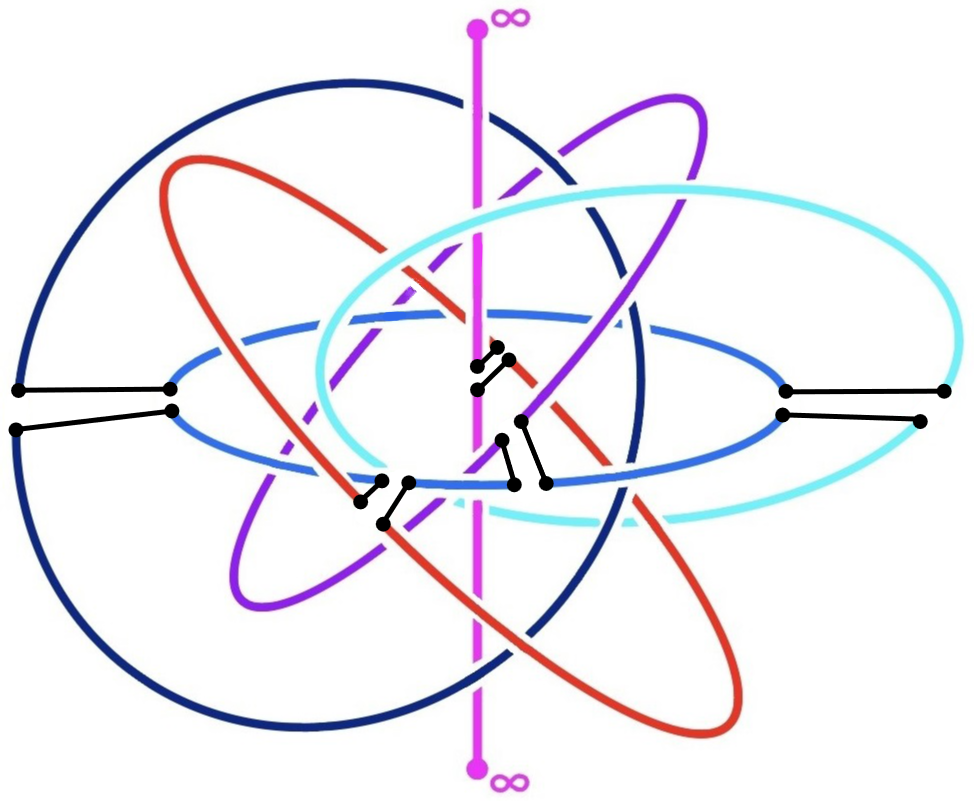}
\caption{\label{fig:AddingLines}
Connecting the disjoint circles comprising $\Pi^{-1}(O)$
to form a path-connected figure $\Gamma_O$
in accordance with the tree $\mathcal T_O$
as in Figure \ref{fig:MakingGraph}. This is the image of a $\left(\frac{5\pi+\delta}{12\pi},\frac{1}{12\pi}\right)$-approximate
7-design curve, with $\delta$ as in Proposition \ref{pro:const}.}
\end{center}
\end{figure}

Applying Lemma \ref{lem:cptd} alongside Lemma \ref{lem:overlap} with $L:=\Pi^{-1}(Y)$ and $\gamma:=\gamma_Y$ for $Y$ a complex projective $\left\lfloor t/2\right\rfloor$-design set, we obtain Proposition \ref{pro:const} when $\gamma_Y$ is a geodesic cycle as a consequence of Lemma \ref{lem:connecting}. Applying Remark \ref{rmk:smooth} and noting that we can make $\delta>0$ arbitrarily small and then perturb $\gamma_Y$ to increase its length and make it smooth, we can see that we may take $\gamma_Y$ to be smooth.

\begin{proof}[Proof of Lemma \ref{lem:connecting}]
Denote by $y_0\in Y$ the root of the tree $\mathcal T_Y$. For each $y\in Y\setminus\{y_0\}$, consider the geodesic $e_y:[0,1]\to\Bbb{CP}^n$ in the set of edges of $\mathcal T_Y$ satisfying $e_y(0)=p$ ($p$ the parent of $y$ in $\mathcal T_Y$) and $e_y(1)=y$. For $z_y\in p$, we then consider a geodesic $g_{z_y}:[0,1]\to S^{2n+1}$ on $S^{2n+1}$  satisfying $g_{z_y}(0)=z_y$ and $\Pi\circ g_{z_y}=e_y$ of minimal length. 
Denoting by $\{c_{y,j}\}_{j=1}^{n_y}$ the $n_y\in\{0,\dots,N_Y\}$ children of $y\in Y$ and considering any $\delta_{y}\in\left[0,\frac{2\pi}{n_y+1}-\widetilde\delta\right)$ for $\widetilde\delta:=\delta/(2(|Y|-1))$, we pick any point of $y_0\subset S^{2n+1}$ (which, in an abuse of notation, we call $g_{z_{y_0}}(1)$ for convenience) and iteratively define 
\[z_{c_{y,j}}:=g_{z_y}(1)e^{2\pi ij/(n_y+1)+i\delta_{y}}\in y\subset S^{2n+1}\]
for all $y\in Y$ and $j\in\{1,\dots,n_y\}$. We can observe that the sets 
\[D_y:=\{z_ye^{i\theta}\:|\:\theta\in[0,\widetilde\delta]\}\cup\{g_{z_y}(1)e^{i\theta}\:|\:\theta\in[0,\widetilde\delta]\}\subset\Pi^{-1}(Y)\]
are pairwise disjoint for all $y\in Y\setminus\{y_0\}$.
We can also arrange that the sets 
\[G_y:=g_{z_y}([0,1])\cup\left(g_{z_y}([0,1])e^{i\widetilde\delta}\right)\] 
are pairwise disjoint by perturbing $\delta_y,\delta_{\widetilde y}\in[0,2\pi)$ whenever $G_{c_{y,j}}\cap G_{c_{\widetilde y,\widetilde j}}\neq\emptyset$ for some $j\in\{1,\dots,n_y\}$ and $\widetilde j\in\{1,\dots,n_{\widetilde y}\}$. Therefore, noting that $G_y\cap \Pi^{-1}(Y)=\partial G_y=\partial D_y$ for all $y\in Y\setminus\{y_0\}$ and that $\Pi^{-1}(Y)$ and $G_y$ are finite unions of geodesics in $S^{2n+1}$, we can see that the set 
\[\Gamma_Y:=\left(\Pi^{-1}(Y)\setminus\bigcup_{y\in Y\setminus\{y_0\}}D_y\right)\bigcup_{y\in Y\setminus\{y_0\}}G_y\]
is a finite union of geodesics in $S^{2n+1}$. Observing that $\Pi(G_y)=e_y([0,1])$, we can see by construction since $\mathcal T_Y$ is connected that $\Gamma_Y$ is homeomorphic to $S^1$, and picking any continuous, piecewise smooth, simple closed curve $\gamma_Y:[0,1]\to\Gamma_Y$, $\gamma_Y$ will be a geodesic cycle satisfying
\begin{gather*}
s(\gamma_Y([0,1])\setminus(\gamma_Y([0,1])\cap\Pi^{-1}(Y)))=s\left(\bigcup_{y\in Y\setminus\{y_0\}}G_y\right)=W_Y, \\
s(\Pi^{-1}(Y)\setminus(\gamma_Y([0,1])\cap\Pi^{-1}(Y)))=s\left(\bigcup_{y\in Y\setminus\{y_0\}}D_y\right)=\delta
\end{gather*}
as desired.
\end{proof}

\section{Asymptotics of approximate $t$-design curves}
\label{sec:asymp}

In this section, to prove Theorem \ref{thm:approxmain}, we combine Proposition \ref{pro:const} with the following asymptotic result for $t$-design sets on $\Bbb{CP}^n$:

\begin{proposition}[Theorem 2.2 of Etayo, Marzo and Ortega-Cerd\`a \cite{Etayo...18}, Theorem 3.2 of Breger, Ehler, and Gr\"af \cite{Breger...18}]\label{pro:CPnasymp}
For any $n\in\Bbb N_+$, there exists a sequence $(Y_t)_{t=1}^\infty$ of $t$-design sets on $\Bbb{CP}^n$ of asymptotically optimal size $|Y_t|\asymp t^{2n}$, and such a sequence will satisfy
\begin{equation}\label{eq:covrad}
\rho_t:=\sup_{w\in\Bbb{CP}^n}\inf_{y\in Y_t}d_{\Bbb{CP}^n}(w,y)\asymp1/t
\end{equation}
as $t\to\infty$ for $d_{\Bbb{CP}^n}$ the metric on $\Bbb{CP}^n$ as in \eqref{eq:cpndist}.
\end{proposition}

As stated, the works \cite{Etayo...18} of Etayo, Marzo and Ortega-Cerd\`a and \cite{Breger...18} of Breger, Ehler, and Gr\"af prove the results which combine to provide Proposition \ref{pro:CPnasymp} for objects which are defined slightly differently from the notion of $t$-design sets of interest in this work. In fact, these definitions are ultimately equivalent; we note this in Subsection \ref{sub:cplx} after we complete the proof of Theorem \ref{thm:approxmain}. To prove this theorem, we first verify it on all odd-dimensional spheres using Proposition \ref{pro:const}, then apply this result alongside a generalized construction \cite[Sections 4-6]{EhlerGrochenig23} of Ehler and Gr\"ochenig to prove the desired result on all even-dimensional spheres.

\begin{proof}[Proof of Theorem \ref{thm:approxmain} for $d$ odd]
With notation as in the theorem statement, writing $d:=2n+1$, consider a sequence $(Y_{\lfloor t/2\rfloor})_{t=1}^\infty$ of $\lfloor t/2\rfloor$-design sets on $\Bbb{CP}^n$ of asymptotically optimal size $|Y_{\lfloor t/2\rfloor}|\asymp t^{2n}$ as $t\to\infty$, as we know exists from Proposition \ref{pro:CPnasymp}. With notation as in Proposition \ref{pro:const}, we take $W_{t}:=W_{Y_{\lfloor t/2\rfloor}}$ alongside $\delta=\delta_t:=\min(1,W_{t})/t$ and consider the simple $\varepsilon_t$-approximate $t$-design curve $\gamma_t:=\gamma_{Y_{\lfloor t/2\rfloor}}$ of length $\ell(\gamma_t)=2\pi|Y_{\lfloor t/2\rfloor}|+W_{t}-\delta_t$ guaranteed to exist from the proposition paired with Remark \ref{rmk:changec} for $\varepsilon_t:=W_t/(\pi|Y_{\lfloor t/2\rfloor}|)$. Noting from the definition of $W_t$ in Subsection \ref{sub:connecting} and considering the covering radius $\rho_{\lfloor t/2\rfloor}$ of $Y_{\lfloor t/2\rfloor}$ as in \eqref{eq:covrad}, we can see that $W_t\leq 4(|Y_{\lfloor t/2\rfloor}|-1)\rho_{\lfloor t/2\rfloor}$. Therefore, noting from Proposition \ref{pro:CPnasymp} that $\rho_{\lfloor t/2\rfloor}\asymp1/t$ and that $|Y_{\lfloor t/2\rfloor}|\asymp t^{2n}$ as $t\to\infty$, we have that $W_t\lesssim t^{2n-1}$ as $t\to\infty$. Therefore, we have $\delta_t\lesssim1/t$ as $t\to\infty$, so $\ell(\gamma_t)\asymp t^{2n}$ and $\varepsilon_t\precsim t^{2n-1}/t^{2n}=1/t$ as $t\to\infty$.
\end{proof}

Now, consider the result \cite[Sections 4-6]{EhlerGrochenig23} of Ehler and Gr\"ochenig which provides a construction of $t$-design curves on $S^{d}$ from $t$-design curves on $S^{d-1}$, which we note extends to the setting of $\varepsilon_t$-approximate $t$-design curves:

\begin{proposition}[Sections 4-6 of Ehler and Gr\"ochenig \cite{EhlerGrochenig23}]\label{pro:caps}
Fix $d-1\in\Bbb N_+$, $\varepsilon_t\geq0$ for $t\in\Bbb N_+$, and a sequence $(\alpha_t)_{t=1}^\infty$ of $\varepsilon_t$-approximate $t$-design curves on $S^{d-1}$. There exists a sequence $(\gamma_t)_{t=1}^\infty$ of $\varepsilon_t$-approximate $t$-design curves on $S^d$ of asymptotic order $\ell(\gamma_t)\asymp t^{d-1}\ell(\alpha_t)$ of arc length as $t\to\infty$.
\end{proposition}

This extension to $\varepsilon_t$-approximate $t$-design curves follows exactly as in the original proof, so we appeal to that work \cite[Sections 4-6]{EhlerGrochenig23} to prove Proposition \ref{pro:caps}. The even-dimensional settings of Theorem \ref{thm:approxmain} then directly follow from combining Proposition \ref{pro:caps} with the odd-dimensional settings of the theorem, completing the proof altogether.

\subsection{Polynomials on complex projective spaces}
\label{sub:cplx}

Recall we noted that, as stated, the works \cite{Etayo...18} of Etayo, Marzo and Ortega-Cerd\`a and \cite{Breger...18} of Breger, Ehler, and Gr\"af, prove the results whose combination is communicated by Proposition \ref{pro:CPnasymp} for objects which are defined slightly differently from how we define $t$-design sets; for completeness, we now show that the definitions of these objects---which we call $Q_t$-design sets and $R_t$-design sets---are equivalent to the definition of $t$-design sets we use. To this end, consider the embedding
\[\varphi:\Bbb{CP}^n\to \Bbb C^{(n+1)^2}\cong\Bbb R^{2(n+1)^2},\quad[\omega]\mapsto \omega\omega^*\]
($\omega^*$ the conjugate transpose of $\omega$) of $\Bbb{CP}^n$ into Euclidean space taking a point $[\omega]$ to the projection matrix mapping $\Bbb{C}^{n+1}$ to the complex line containing $\omega$. We then define
\begin{equation}\label{eq:Qt}
Q_t(\Bbb{CP}^n):=\varphi^*(P_t(\Bbb R^{2(n+1)^2})).
\end{equation}
Additionally, for $\Delta_{\Bbb{CP}^n}$ the Laplace-Beltrami operator (with non-negative eigenvalues) on $\Bbb{CP}^n$, we denote by $R_t(\Bbb{CP}^n)$ the space of \emph{diffusion polynomials} of degree at most $t$ on $\Bbb{CP}^n$, the span over $\Bbb R$ of real-valued eigenfunctions of $\Delta_{\Bbb{CP}^n}$ in $C^\infty(\Bbb{CP}^n)$ of eigenvalue at most $4t(t+n)$.
Then, we say a finite subset $Y\subset\Bbb{CP}^n$ is a \emph{$Q_t$-design set} or a \emph{$R_t$-design set} if 
\[\frac1{|Y|}\sum_{y\in Y}g(y)=\int_{\Bbb{CP}^n}g\,d\rho\]
for all $g\in Q_t(\Bbb{CP}^n)$ or for all $g\in R_t(\Bbb{CP}^n)$ respectively.

\begin{lemma}\label{lem:poly}
For any $t\in\Bbb N$, we have $P_t(\Bbb{CP}^n)=Q_t(\Bbb{CP}^n)=R_t(\Bbb{CP}^n)$.
\end{lemma}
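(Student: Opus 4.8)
The plan is to reduce the equality of all three spaces to the $U(n+1)$-equivariant decomposition of functions on $S^{2n+1}$ into bigraded spherical harmonics and its compatibility with the Hopf fibration $\Pi$. I would write $\mathcal H_{p,q}$ for the space of restrictions to $S^{2n+1}$ of harmonic polynomials on $\Bbb C^{n+1}$ that are homogeneous of degree $p$ in $\omega_1,\dots,\omega_{n+1}$ and degree $q$ in $\bar\omega_1,\dots,\bar\omega_{n+1}$, so that $L^2(S^{2n+1})=\bigoplus_{p,q\ge0}\mathcal H_{p,q}$. The scalar action of $z\in S^1$ multiplies $\mathcal H_{p,q}$ by $z^{p-q}$, so a function is $S^1$-invariant (hence descends through $\Pi$) exactly when it lies in $\bigoplus_{p\ge0}\mathcal H_{p,p}$; moreover an element of $\mathcal H_{p,q}$ has total real degree $p+q$, whence $P_m(S^{2n+1})=\bigoplus_{p+q\le m}\mathcal H_{p,q}$. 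The goal is then to show that each of $P_t$, $Q_t$, and $R_t$ equals the descent to $\Bbb{CP}^n$ of $\bigoplus_{p\le t}\mathcal H_{p,p}$.

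To identify $R_t$ and $P_t$, I would use that $\Pi$ is a Riemannian submersion out of the unit sphere with totally geodesic fibers for the standard metric on $\Bbb{CP}^n$, so that $\Delta_{S^{2n+1}}\Pi^*=\Pi^*\Delta_{\Bbb{CP}^n}$ on $S^1$-invariant functions. Since $\Delta_{S^{2n+1}}$ acts on degree-$m$ harmonics by $m(m+2n)$, it acts on $\mathcal H_{p,p}$ (where $m=2p$) by $4p(p+n)$; hence the descent of $\mathcal H_{p,p}$ is precisely the $\Delta_{\Bbb{CP}^n}$-eigenspace of eigenvalue $4p(p+n)$, and as this is strictly increasing in $p$ the bound $4t(t+n)$ selects exactly $p\le t$, giving $R_t=\bigoplus_{p\le t}\mathcal H_{p,p}$ after descent. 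For $P_t$, a function $f$ lies in $P_t(\Bbb{CP}^n)$ iff $\Pi^*f\in P_{2t}(S^{2n+1})$, and since $\Pi^*f$ is automatically $S^1$-invariant this places it in $\bigoplus_{p+q\le2t}\mathcal H_{p,q}\cap\bigoplus_p\mathcal H_{p,p}=\bigoplus_{p\le t}\mathcal H_{p,p}$; thus $P_t$ is the same descent and $P_t=R_t$.

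To handle $Q_t=P_t$, I would first note that $\varphi\circ\Pi\colon S^{2n+1}\to\Bbb R^{(2n+2)^2}$ is $\omega\mapsto\omega\omega^*$, whose coordinate functions are the real and imaginary parts of $\omega_i\bar\omega_j$, each a real polynomial of degree $2$; hence for $p$ of degree $\le t$ the pullback $\Pi^*(\varphi^*p)=p\circ(\varphi\circ\Pi)$ has degree $\le2t$, which gives $Q_t\subseteq P_t$. For the reverse inclusion, given $f\in P_t$ I would write $\Pi^*f=P|_{S^{2n+1}}$ for a polynomial $P$ of degree $\le2t$ on $\Bbb C^{n+1}$ and average $P$ over the $S^1$-action to obtain a $U(1)$-invariant polynomial $\tilde P$ of degree $\le2t$ with the same restriction to the sphere (using $S^1$-invariance of $\Pi^*f$). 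Invariance forces $\tilde P=\sum_{|\alpha|=|\beta|}c_{\alpha\beta}\,\omega^\alpha\bar\omega^\beta$, and each monomial of bidegree $(m,m)$ with $2m\le2t$ is a product of $m$ factors $\omega_i\bar\omega_j=\varphi^*M_{ij}$; thus $f$ is, after descent, a polynomial of degree $\le t$ in the $\varphi^*M_{ij}$, i.e. $f\in Q_t$. This closes the chain $P_t=Q_t=R_t$.

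The hard part will be the bookkeeping that makes the three different ``degree $\le t$'' conventions agree: the factor of two between the sphere degree $2t$ and the base degree $t$, the passage from bidegree $(m,m)$ to real degree $2m$, and the metric normalization making the eigenvalue on $\mathcal H_{p,p}$ equal to $4p(p+n)$---equivalently, that $\Pi$ is a Riemannian submersion out of the \emph{unit} sphere with totally geodesic fibers, which is what underlies the identity $\Delta_{S^{2n+1}}\Pi^*=\Pi^*\Delta_{\Bbb{CP}^n}$ on basic functions. Once this normalization is pinned down and that submersion identity is in hand, each individual inclusion is routine.
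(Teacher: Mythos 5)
Your proposal is correct, and its skeleton matches the paper's---degree counting for $Q_t\subseteq P_t$, circle-averaging to balanced monomials for $P_t\subseteq Q_t$, and commutation of Laplacians under $\Pi$ plus eigenvalue bookkeeping for $P_t=R_t$---but you execute two of the three steps by genuinely different means. For $P_t=R_t$, the paper proves the identity $\Delta\circ\Pi^*=\Pi^*\circ\Delta_{\Bbb{CP}^n}$ on basic functions by an explicit computation in geodesic normal coordinates (choosing coordinates on $S^{2n+1}$ with $\partial_{2n+1}=i\omega$ along the fiber), and then only needs the single-graded harmonic decomposition $f=\sum_i f_i$ together with the equivalence $i\leq 2t\iff i(i+2n)\leq 4t(t+n)$; you instead import the commutation as the standard fact that a Riemannian submersion with totally geodesic fibers intertwines the Laplacians, and organize everything through the bigraded spaces $\mathcal H_{p,q}$, which is stronger than what the paper establishes---your route identifies the full spectrum $\{4p(p+n)\}_{p\geq0}$ of $\Bbb{CP}^n$ and exhibits all three spaces uniformly as the descent of $\bigoplus_{p\leq t}\mathcal H_{p,p}$, at the cost of treating the submersion lemma as a black box where the paper is self-contained. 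For $P_t\subseteq Q_t$ the two arguments are the same averaging idea in different clothing: you average the ambient polynomial over the scalar $S^1$-action (note this preserves degree, so the bound $\leq 2t$ survives), while the paper integrates monomials over each fiber $w$ and uses $f(\omega)=\int_w f\,d\sigma$ for fiber-constant $f$ to discard the monomials with $|\alpha|\neq|\beta|$; your factorization of a bidegree-$(m,m)$ monomial into $m$ factors $\omega_{i_k}\overline{\omega_{j_k}}$ is the step the paper leaves implicit in asserting $\omega^{\alpha,\beta}\in Q_t(\Bbb{CP}^n)$ for balanced $(\alpha,\beta)$. Two small points to pin down in a final write-up: since $P_t$ consists of \emph{real-valued} functions while the $\mathcal H_{p,q}$ are complex, you should note $\overline{\mathcal H_{p,q}}=\mathcal H_{q,p}$, so $\bigoplus_p\mathcal H_{p,p}$ is conjugation-stable and your identifications restrict correctly to real subspaces; and the normalization you flag as ``the hard part'' is exactly the paper's implicit convention that $\Bbb{CP}^n$ carries the Fubini--Study metric induced by the submersion from the unit sphere, which is what makes the eigenvalue on $\mathcal H_{p,p}$ equal $2p(2p+2n)=4p(p+n)$ and matches the threshold in the definition of $R_t$.
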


As a corollary, we see that the conditions that a finite subset of $\Bbb{CP}^n$ is a $t$-design set, a $Q_t$-design set, and a $R_t$-design set are equivalent. 

\begin{proof}[Proof of Lemma \ref{lem:poly}]
We first show that $P_t(\Bbb{CP}^n)=Q_t(\Bbb{CP}^n)$. To this end, note that for any $c\in\Bbb C$, the real and imaginary parts of $c$ respectively satisfy
\begin{equation}\label{eq:realim}
\Re(c)=\frac12(\overline c + c),\quad\Im(c)=\frac12(\overline c-c)i.
\end{equation}
Observe from the definition \eqref{eq:Qt} of $Q_t(\Bbb{CP}^n)$ that $\Pi^*(Q_t(\Bbb{CP}^n))$ is exactly the span over $\Bbb R$ of products of at most $t$ of the functions 
\[\omega\mapsto\Re(\omega_i\overline{\omega_j}),\quad\omega\mapsto\Im(\omega_i\overline{\omega_j})\quad(i,j\in\{1,\dots,n+1\})\]
taking $\omega\in S^{2n+1}\subset\Bbb C^{n+1}$ to the real and imaginary parts of entries of $\omega\omega^*$. Thus, we may see from \eqref{eq:realim} that $\Pi^*(Q_t(\Bbb{CP}^n))\subset P_{2t}(S^{2n+1})$, so $Q_t(\Bbb{CP}^n)\subset P_t(\Bbb{CP}^n)$. To show the reverse inclusion, pick $g\in P_t(\Bbb{CP}^n)$. We see from the definition \eqref{eq:PtCPn} of $P_t(\Bbb{CP}^n)$ that $\Pi^*g$ is in the span over $\Bbb R$ of products of at most $2t$ of the functions 
\[\omega\mapsto\Re(\omega_i),\quad\omega\mapsto\Im(\omega_i)\quad(i\in\{1,\dots,n+1\})\]
taking $\omega\in S^{2n+1}\subset\Bbb C^{n+1}$ to the real and imaginary parts of its entries. 
\eqref{eq:realim} allows us to pick $c_{\alpha,\beta}\in\Bbb C$ for each $(\alpha,\beta)\in I$ such that 
\[(\Pi^*g)(\omega)=\sum_{(\alpha,\beta)\in I}c_{\alpha,\beta}\omega^{\alpha,
\beta}\quad\textit{for}\quad\omega^{\alpha,
\beta}:=\prod_{i=1}^{n+1}\omega_i^{\alpha_i}\overline{\omega_i}^{\beta_i},\]
where we define
\[I:=\left\{(\alpha,\beta)\in\Bbb N^{2n+2}\:\middle|\:\sum_{i=1}^{n+1}(\alpha_i+\beta_i)\leq2t\right\}.\]
Note that $\omega^{\alpha,\beta}$ lies in the complexification of $\Pi^*(Q_t(\Bbb{CP}^n))$ for any
\[(\alpha,\beta)\in I_0:=\left\{(\alpha,\beta)\in I\:\middle|\:\sum_{i=1}^{n+1}(\alpha_i-\beta_i)=0\right\},\]
so if we show that 
\begin{equation}\label{eq:I0}
(\Pi^*g)(\omega)=\sum_{(\alpha,\beta)\in I_0}c_{\alpha,\beta}\omega^{\alpha,
\beta},
\end{equation}
we will have $\Pi^*g\in\Pi^*(Q_t(\Bbb{CP}^n))$. Picking any $z_w=(z_{w,1},\dots,z_{w,n+1})\in w$ for $w\in\Bbb{CP}^n$ and fixing any $(\alpha,\beta)\in I$, we may observe that
\begin{equation*}\label{eq:fiberint}
\begin{split}
\int_w\omega^{\alpha,\beta}\,d\sigma(\omega)&=\int_{S^1}\prod_{i=1}^{n+1}(z_{w,i}\zeta)^{\alpha_i}(\overline{z_{w,i}\zeta})^{\beta_i}\,d\sigma(\zeta) \\
&=\left(\prod_{i=1}^{n+1}z_{w,i}^{\alpha_i}\overline{z_{w,i}}^{\beta_i}\right)\int_{S^1}\prod_{i=1}^{n+1}\zeta^{\alpha_i}\overline\zeta^{\beta_i}\,d\sigma(\zeta)\\
&=\left(\prod_{i=1}^{n+1}z_{w,i}^{\alpha_i}\overline{z_{w,i}}^{\beta_i}\right)\int_{S^1}\zeta^{\sum_{i=1}^{n+1}(\alpha_i-\beta_i)}\,d\sigma(\zeta),
\end{split}
\end{equation*}
which equals 0 whenever $(\alpha,\beta)\not\in I_0$. Picking any $\omega\in w\in\Bbb{CP}^n$ and observing that $\omega^{\alpha,\beta}\in \Pi^*(C^\infty(\Bbb{CP}^n))$ when $(\alpha,\beta)\in I_0$,
we therefore have that
\begin{equation*}
\begin{split}
(\Pi^*g)(\omega)&=\int_w\Pi^*g\,d\sigma \\
&=\sum_{(\alpha,\beta)\in I}c_{\alpha,\beta}\int_w\omega^{\alpha,\beta}\,d\sigma(\omega) \\
&=\sum_{(\alpha,\beta)\in I_0}c_{\alpha,\beta}\int_w\omega^{\alpha,\beta}\,d\sigma(\omega) \\
&=\sum_{(\alpha,\beta)\in I_0}c_{\alpha,\beta}\omega^{\alpha,\beta},
\end{split}
\end{equation*}
showing \eqref{eq:I0}. Thus, $\Pi^*g\in\Pi^*(Q_t(\Bbb{CP}^n))$, so $P_t(\Bbb{CP}^n)\subset Q_t(\Bbb{CP}^n)$. This completes the proof that $P_t(\Bbb{CP}^n)=Q_t(\Bbb{CP}^n)$.

We now show that $P_t(\Bbb{CP}^n)=R_t(\Bbb{CP}^n)$. To this end, we first note that the Hopf map is a Riemannian submersion with totally geodesic fibers. Therefore, it is straightforward to show that
\[\Delta|_{\Pi^*(C^\infty(\Bbb{CP}^n))}=\Pi^*\circ\Delta_{\Bbb{CP}^n}\circ(\Pi^*)^{-1},\]
where $(\Pi^*)^{-1}$ is the inverse of $\Pi^*:C^\infty(\Bbb{CP}^n)\to\Pi^*(C^\infty(\Bbb{CP}^n))$. As eigenfunctions of $\Delta$ in $C^\infty(S^{2n+1})$ of eigenvalue at most $4t(t+n)$ are in $P_{2t}(S^{2n+1})$, this shows that $\Pi^*(R_t(\Bbb{CP}^n))\subset P_{2t}(S^{2n+1})$, so $R_t(\Bbb{CP}^n)\subset P_{t}(\Bbb{CP}^n)$. To show the reverse inclusion, observe from the well-known decomposition \cite[Theorem 5.7]{Axler...01} of the space of homogeneous polynomials of fixed degree into a direct sum of spaces of homogeneous harmonic polynomials that for any $f\in P_{2t}(S^{2n+1})$, there exist eigenfunctions $f_i\in P_{i}(S^{2n+1})$ of $\Delta$ with eigenvalue $i(i+2n)$ (or zero) which are homogeneous of degree $i$ (if nonvanishing) for $i\in\{0,\dots,2t\}$ such that $f=\sum_{i=0}^{2t}f_i$. Then, we see that if $f\in\Pi^*(P_t(\Bbb{CP}^n))$, we must have $f_i\in\Pi^*(P_t(\Bbb{CP}^n))$, so each $f_i$ is an eigenfunction of $\Pi^*\circ\Delta_{\Bbb{CP}^n}\circ(\Pi^*)^{-1}$ with eigenvalue at most $4t(t+n)$. Therefore, we have $f\in\Pi^*(R_t(\Bbb{CP}^n))$, proving that $P_t(\Bbb{CP}^n)\subset R_t(\Bbb{CP}^n)$. This completes the proof that $P_t(\Bbb{CP}^n)=R_t(\Bbb{CP}^n)$
\end{proof}

\section{Weighted $t$-design curves}
\label{sec:weight}

With $P_t(S^d)$ and $\sigma$ as in Section \ref{sec:approxprelim}, we now formally introduce weighted $t$-design curves:

\begin{definition}\label{def:weight}
Consider $d\in\Bbb N_+$ and $t\in\Bbb N$. For a continuous, piecewise smooth, closed curve $\gamma:[0,1]\to S^d$ with finitely many self-intersections and a function $w:[0,1]\to\Bbb R$, we say that the pair $(\gamma,w)$ is a \emph{weighted $t$-design curve} if, for all $f\in P_t(S^d)$, we have
\[\int_0^1f(\gamma(s))w(s)|\gamma'(s)|\,ds=\int_{S^d}f\,d\sigma.\]
\end{definition}

So, we can observe that such $(\gamma,w)$ will be an (unweighted) $t$-design curve exactly when $w$ is the constant function $1/\ell(\gamma)$. All weighted $t$-design curves of interest in this manuscript have weight function $w$ which is strictly positive at all but finitely many points, so they can be reparameterized to satisfy $w=1/|\gamma'|$. Note that, reparameterizing a weighted $t$-design curve as such, the resulting curve will no longer be piecewise smooth if its speed $|\gamma'|$ is unbounded. For convenience, we will nevertheless refer to any pair $(\gamma,w)$ with unbounded speed which can be reparameterized to produce a weighted $t$-design curve as, itself, a weighted $t$-design curve.

Weighted $t$-design curves are the natural curve-based analogue of weighted $t$-design sets (i.e., quadrature and cubature formulas), which are pairs $(X,\lambda)$ of a finite subset $X\subset S^d$ and a function $\lambda:X\to\Bbb R\setminus\{0\}$ such that
\[\sum_{x\in X}f(x)\lambda(x)=\int_{S^d}f\,d\sigma\]
for all $f\in P_t(S^d)$. In Subsection \ref{sub:buildweight}, we present and prove Proposition \ref{pro:weightconst}, which produces a weighted $t$-design curve on $S^d$ from a weighted $t$-design set on $S^{d-1}$. We then combine this proposition with asymptotic results in strength $t$ concerning weighted $t$-design sets (alongside a construction presented in Subsection \ref{sub:lifting}) to prove Theorem \ref{thm:weight} in Subsection \ref{sub:weightasymp}, then use the proposition combined with asymptotic results in dimension $d$ concerning weighted $t$-design sets to prove a corresponding result for weighted $t$-design curves in Subsection \ref{sub:weightdimasymp}.

\subsection{Building weighted $t$-design curves}
\label{sub:buildweight}

Consider any weighted $t$-design set $(X,\lambda)$ on $S^{d-1}$ of even size $|X|\in2\Bbb N_+$. Writing $X=\{x_i\}_{i=1}^{2N}$, $\Lambda_0:=0$, and
\[\Lambda_i:=\sum_{j=1}^{i}|\lambda(x_j)|/\sum_{j=1}^{2N}|\lambda(x_j)|\quad(i\in\{1,\dots,2N\}),\]
for $s\in[\Lambda_{i-1},\Lambda_i]$, we set
\[\alpha_{X}(s):=\sqrt{1-\alpha_\lambda(s)^2}x_i,\quad\alpha_\lambda(s):=(-1)^{i-1}\left(2\frac{s-\Lambda_{i-1}}{\Lambda_i-\Lambda_{i-1}}-1\right)\]
alongside $l(s):=\lambda(x_i)/(\Lambda_i-\Lambda_{i-1})$. We then consider the pair
\begin{equation}\label{eq:weightconst}
\begin{gathered}
\alpha_{X,\lambda}:=(\alpha_X,\alpha_\lambda):[0,1]\to S^d, \quad
w_{X,\lambda}:=\frac{2cl(1-\alpha_\lambda^2)^{\frac{d-2}2}}{|\alpha'_{X,\lambda}|}:[0,1]\to\Bbb R,
\end{gathered}
\end{equation}
where $c$ is the quotient of the volume of $S^{d-1}$ by the volume of $S^{d}$ (each equipped with their standard measures). 

\begin{figure}
\begin{center}
\includegraphics[width=.6\textwidth]
{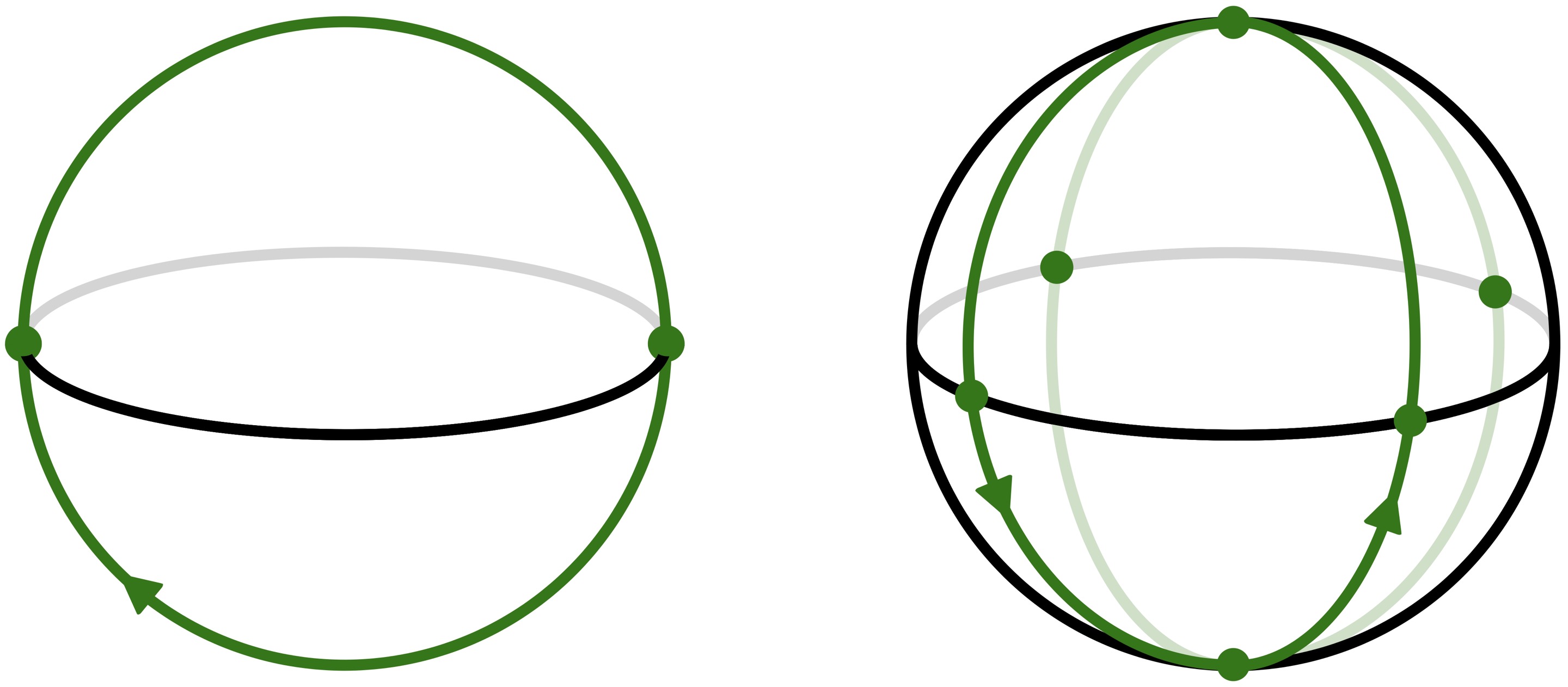}
\caption{\label{fig:Weightex}
Images of the weighted 1-design curve $(\alpha_{V_2,1/2},w_{V_2,1/2})$ (left) and the weighted 3-design curve $(\alpha_{V_4,1/4},w_{V_4,1/4})$ (right) on $S^2$ respectively resulting from the construction of Proposition \ref{pro:weightconst} applied to a 1-design set (antipodal points $V_2$) and 3-design set (the vertices $V_4$ of a 4-gon) on $S^1$.
}
\end{center}
\end{figure}

\begin{proposition}\label{pro:weightconst}
For $d\geq2$, $t\in\Bbb N$, and a weighted $t$-design set $(X,\lambda)$ on $S^{d-1}$ of even size $|X|$, we have that $(\alpha_{X,\lambda},w_{X,\lambda})$ is a weighted $t$-design curve on $S^d$.
\end{proposition} 

\begin{proof}
Consider notation as in the proposition statement alongside the map $h:S^d\to[-1,1]$ taking an element of $S^d\subset\Bbb R^{d+1}$ to its last real coordinate, whose preimages $h^{-1}(s')$ we equip with the uniform spherical measure $\sigma$, normalized so that $\sigma(h^{-1}(s'))=1$. Picking $f\in P_t(S^d)$, note that $f|_{h^{-1}(s')}$ is a polynomial of degree at most $t$ on $h^{-1}(s')\cong S^{d-1}$ for all $s'\in(-1,1)$. We therefore see that
\begin{equation*}
\begin{split}
\int_0^1f(\alpha_{X,\lambda}(s))&w_{X,\lambda}(s)|\alpha_{X,\lambda}'(s)|\,ds \\
&=\sum_{i=1}^{2N}\int_{\Lambda_{i-1}}^{\Lambda_i}2f(\alpha_{X,\lambda}(s))cl(s)(1-\alpha_\lambda(s)^2)^{\frac{d-2}2}\,ds \\
&=\sum_{i=1}^{2N}\int_{-1}^1f(\alpha_{X,\lambda}(\alpha_\lambda|_{[\Lambda_{i-1},\Lambda_i]}^{-1}(s)))c\lambda(x_i)(1-s^2)^{\frac{d-2}2}\,ds \\
&=\int_{-1}^1\sum_{i=1}^{2N}\lambda(x_i)f\left(\sqrt{1-s^2}x_i,s\right)c(1-s^2)^{\frac{d-2}2}\,ds \\
&=\int_{-1}^{1}\int_{h^{-1}(s)}f(\omega)d\sigma(\omega)c(1-s^2)^{\frac{d-2}2}\,ds \\
&=\int_{S^d}f(\omega)\,d\sigma(\omega),
\end{split}
\end{equation*}
so $(\alpha_{X,\lambda},w_{X,\lambda})$ is a weighted $t$-design curve, as desired.
\end{proof}

\begin{figure}
\begin{center}
\includegraphics[width=.75\textwidth]
{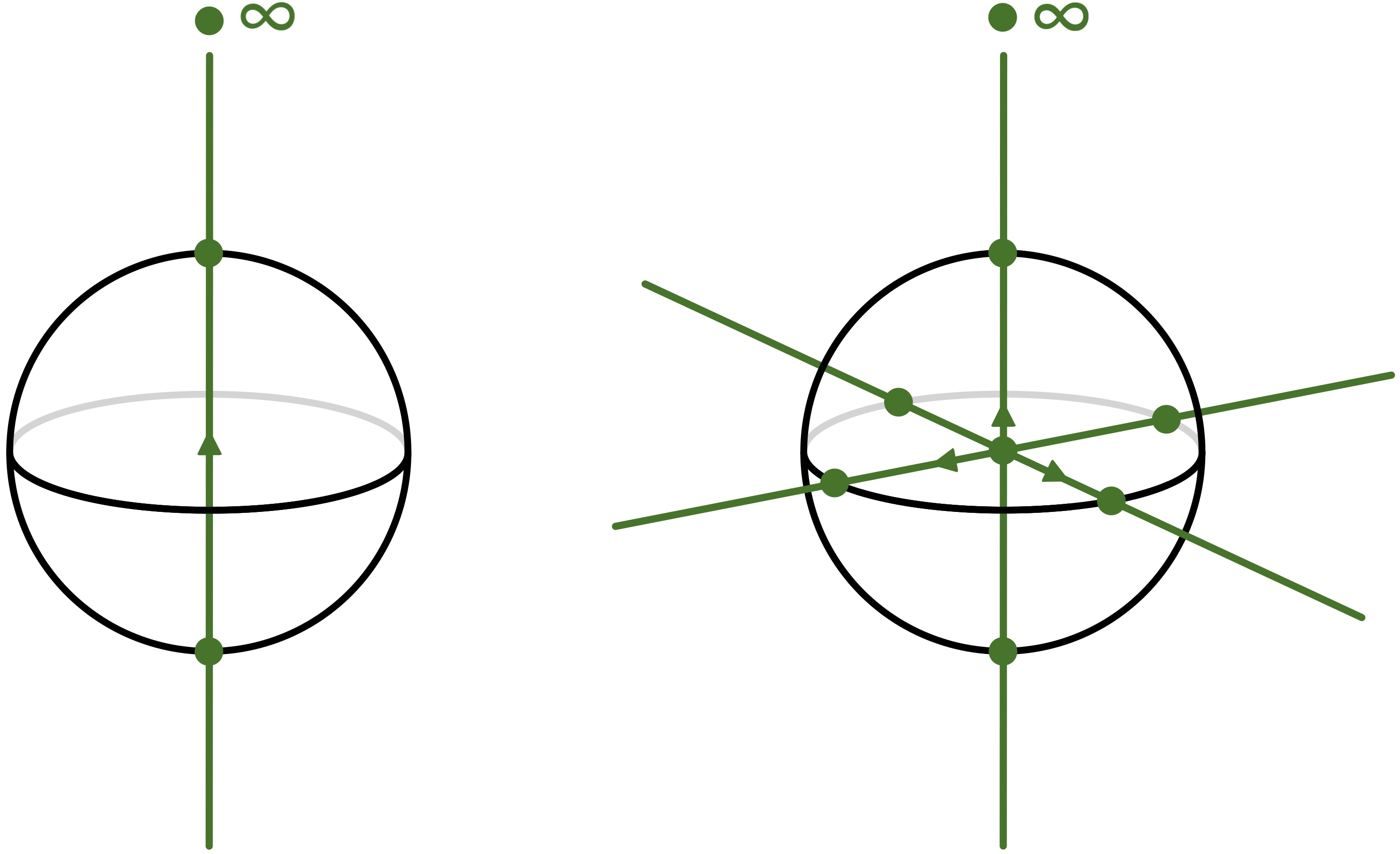}
\caption{\label{fig:Weightex2}
Images of a weighted 1-design curve (left) and a weighted 3-design curve (right) on $S^3\cong\Bbb R^3\cup\{\infty\}$ respectively resulting from the construction of Proposition \ref{pro:weightconst} applied to a 1-design set (antipodal points) and 3-design set (the vertices of an octahedron) on $S^2$.
}
\end{center}
\end{figure}

In fact, for any piecewise smooth map $R:[-1,1]\to\text{SO}(d)$, defining
\begin{equation}\label{eq:rotation}
\begin{gathered}
\alpha_{X,\lambda,R}:=((R\circ\alpha_{\lambda})\alpha_X,\alpha_{\lambda}),\quad w_{X,\lambda,R}:=\frac{2cl(1-\alpha_{\lambda}^2)^{\frac{d-2}2}}{|\alpha'_{X,\lambda,R}|}
\end{gathered}
\end{equation}
as in \eqref{eq:weightconst}, the proof of Proposition \ref{pro:weightconst} directly shows that $(\alpha_{X,\lambda,R},w_{X,\lambda,R})$ is also a weighted $t$-design curve.

\subsection{Lifting weighted $t$-design curves}
\label{sub:lifting}

The construction \cite[Theorem 1.3]{Lindblad24b} of the present author which builds a $t$-design curve on $S^3$ from a $\lfloor t/2\rfloor$-design curve on $S^2$ directly generalizes to the setting of weighted $t$-design curves to provide the result of Theorem \ref{thm:S3stitch}:

\begin{theorem}[Weighted analogue of Theorem 1.3 in work \cite{Lindblad24b} of the present author]\label{thm:S3stitch}
For any $t\in\Bbb N$, any weighted $\lfloor t/2\rfloor$-design curve $(\alpha=(\alpha_\Bbb R,\alpha_\Bbb C),w_\alpha)$ on $S^2\subset\Bbb R\times\Bbb C$ satisfying $\alpha_\Bbb R(s)\neq-1$ for $s\in[0,1]$, any positive integer $g$ coprime to $t+1$, and any continuous map $\theta:[0,1]\to\Bbb R$ which is smooth on the complement of a finite subset of $[0,1]$, whose derivative is $L^1$-integrable, and which satisfies $\theta(1)-\theta(0)\in2\pi\Bbb Z$, we define
\begin{equation*}\label{eq:lift}
\begin{gathered}
\gamma_{\alpha,\theta}:[0,1]\to S^3,\quad s\mapsto\frac1{\sqrt2}\left(\sqrt{1+\alpha_\Bbb R(r)}, \frac{\overline{\alpha_\Bbb C(r)}}{\sqrt{1+\alpha_\Bbb R(r)}}\right)e^{2\pi isg+i\theta(r)}, \\
w_{\alpha,\theta}:[0,1]\to \Bbb R,\quad s\mapsto\frac{w_\alpha(r)|\alpha'(r)|}{|\gamma_{\alpha,\theta}'(s)|} \\
\end{gathered}
\end{equation*}
for $r:=(t+1)s-\lfloor(t+1)s\rfloor$. $(\gamma_{\alpha,
\theta},w_{\alpha,\theta})$ is a weighted $t$-design curve on $S^3$.
\end{theorem}

The proof of Theorem \ref{thm:S3stitch} follows as in the unweighted setting \cite[Theorem 1.3]{Lindblad24b}, so we refer to the argument in that setting to verify the theorem. Also as in this unweighted setting, for any $\varepsilon>0$ and $\theta_1$ as in the theorem statement, we may perturb $\theta_1$ to some $\theta_2$ such that $\gamma_{\alpha,\theta_2}$ is simple and $\ell(\gamma_{\alpha,\theta_2})=\ell(\gamma_{\alpha,\theta_1})+\varepsilon$. When $\gamma_{\alpha,\theta_1}$ is an unweighted $t$-design curve, we may then arrange that the same is true of $\gamma_{\alpha,\theta_2}$. Note also that when $|\alpha'|$ is bounded on the subset of $[0,1]$ on which $\alpha$ is smooth and $w_\alpha$ is positive and bounded away from 0 and $\infty$, we may pick $\theta$ such that $\gamma_{\alpha,\theta}$ is an unweighted $t$-design curve  by varying $\theta$ such that $w_{\alpha,\theta}$ is constant for all $s\in[0,1]$ at which it is defined.

\subsection{Asymptotics of weighted $t$-design curves}
\label{sub:weightasymp}

Fix any $t\in\Bbb N_+$. Noting that the vertices 
\begin{equation}\label{eq:verts}
V_{2t}:=\{e^{\pi i(2j+1)/(2t)}\in S^1\:|\:j\in\{0,\dots,2t-1\}\}
\end{equation}
of a $2t$-gon are a $(2t-1)$-design set on $S^1$ for all $t\in\Bbb N$ \cite[Example 5.14]{Delsarte...77}, we can see from Proposition \ref{pro:weightconst} that $(\alpha_{V_{2t},1/(2t)},w_{V_{2t},1/(2t)})$ as in \eqref{eq:weightconst} is a weighted $(2t-1)$-design curve on $S^2$ of length $2\pi t$. We may then observe that this pair gives exactly $(\alpha,1/|\alpha'|)$ as in \eqref{eq:explicitweight2}. Moreover, 
setting $\alpha:=(\alpha_\Bbb R,\alpha_\Bbb C):=\alpha_{V_{2t},1/(2t)}$ and $w_\alpha:=w_{V_{2t},1/(2t)}$, we then have from \eqref{eq:verts} that $\alpha_\Bbb R(s)\neq-1$ for all $s\in[0,1]$. Considering $\theta:[0,1]\to\Bbb R$ as in Theorem \ref{thm:S3stitch} and setting $g:=2t+1$ (which we note will always be coprime to $4t$), we may then apply Theorem \ref{thm:S3stitch} to produce a weighted $(4t-1)$-design curve $(\gamma_{\alpha,\theta},w_{\alpha,\theta})$ on $S^3$. We may observe from the proof of the unweighted analogue \cite[Theorem 1.3]{Lindblad24b} of Theorem \ref{thm:S3stitch} that the length computation \cite[(1)]{Lindblad24b} holds in that setting; therefore, the length of $\gamma_{\alpha,\theta}$ will be greater than or equal to $2\pi\sqrt{4t^4+1}$, with equality for some $\theta$. We may also then observe that after perturbation of any given $\theta$, we may arrange that $\gamma_{\alpha,\theta}$ is simple. We can then see that $(\gamma_{\alpha,\theta},w_{\alpha,\theta})$ is exactly $(\gamma,1/|\gamma'|)$ as in \eqref{eq:explicitweight3}.

We now verify Theorem \ref{thm:weight} for general $d\geq2$. Taking $W_d\geq\pi C_{d-1}$ such that $W_{d}/\pi$ is even for $C_{d-1}$ as in work of Bondarenko, Radchenko, and Viazovska \cite[Theorem 1]{Bondarenko...13}, for any $t\in\Bbb N_+$, there then exists a $t$-design set $X$ on $S^{d-1}$ of even size $W_{d}t^{d-1}/\pi$. Proposition \ref{pro:weightconst} then gives that $(\alpha_{X,1/|X|},w_{X,1/|X|})$ is a weighted $t$-design curve on $S^d$, and we may observe that $\alpha_{X,1/|X|}$ has length $W_{d}t^{d-1}$ and that $w_{X,1/|X|}$ is strictly positive at all but finitely many points. 
For any $W\geq W_{d}t^{d-1}$, we may then consider $R$ and $(\alpha_{X,1/|X|,R},w_{X,1/|X|,R})$ as in \eqref{eq:rotation} such that the length of $\alpha_{X,1/|X|,R}$ is exactly $W$, and we may observe that $w_{X,1/|X|,R}$ will still be strictly positive at all but finitely many points. This completes the proof of Theorem \ref{thm:weight}. We then see from Proposition \ref{pro:weightEG} (which follows exactly as in the proof of Theorem 1.1 of Ehler and Gr\"ochenig \cite{EhlerGrochenig23}) that the asymptotic order of length of the curves $(\alpha_{X,1/|X|},w_{X,1/|X|})$ is optimal among weighted $t$-design curves with non-negative weight functions:

\begin{proposition}\label{pro:weightEG}
For each $d\in\Bbb N_+$, there exists a constant $w_d>0$ such that for any $t\in\Bbb N_+$, the length of any weighted $t$-design curve on $S^d$ with non-negative weight function is at least $w_dt^{d-1}$.
\end{proposition}

\subsection{Asymptotics in dimension of weighted $t$-design curves}
\label{sub:weightdimasymp}

Work \cite[Theorem 1.6]{Dillon24} of Dillon shows that for each $t\in\Bbb N_+$, there exists a constant $D_t$ such that for any $d-1\in\Bbb N_+$, there exists a weighted $t$-design set $(X,\lambda)$ on $S^{d-1}$ such that $\lambda$ is strictly positive and $X$ has size (which we may assume is even after potentially replacing the design with two copies of itself, one generically rotated away from the other) at most $D_td^{t-1}$. Picking $\widetilde W_{t}\geq \pi D_t$ such that $\widetilde W_t/\pi$ is an even integer, for any $\widetilde W\geq\widetilde W_td^{t-1}$, we may then pick smooth $R:[-1,1]\to\text{SO}(d)$ such that $(\alpha_{X,\lambda,R},w_{X,\lambda,R})$ as in \eqref{eq:rotation} is a weighted $t$-design curve on $S^d$ of length $\widetilde W$, and we may observe that $w_{X,\lambda,R}$ is strictly positive at all but finitely many points. This proves Proposition \ref{pro:weightstrength}, which---in analogy with results concerning the asymptotic order of size of weighted $t$-design sets on spheres $S^d$ as $d\to\infty$ for fixed $t$ \cite{Tchakaloff57,Dillon24}---gives a result concerning the asymptotic order of length of weighted $t$-design curves as $d\to\infty$ for fixed $t$.

\begin{proposition}\label{pro:weightstrength}
For any $t\in\Bbb N_+$, there exists a constant $\widetilde W_t$ such that for any $d\geq2$ and $\widetilde W\geq\widetilde W_td^{t-1}$, there exists a weighted $t$-design curve on $S^d$ of length $\widetilde W$ with weight function strictly positive except at finitely many points.
\end{proposition}

The weighted $t$-design curves said to exist in Proposition \ref{pro:weightstrength} are not in general \emph{asymptotically optimal} as $d\to\infty$: we may produce weighted 2-design and 3-design curves on $S^d$ of lengths $\pi(d+1)$ and $2\pi d$ respectively by applying Proposition \ref{pro:weightconst} to the vertices of a regular simplex (for $d$ odd) and of a cross-polytope on $S^{d-1}$, which respectively constitute 2-design and 3-design sets.

\section*{Acknowledgements}
\label{sec:thanks}

The author would like to thank Henry Cohn, Tom Mrowka, and Karlheinz Gr\"ochenig for helpful comments and discussions. The author would also like to thank the School of Science, the Department of Mathematics, and the Office of Graduate Education for support through the MIT Dean of Science fellowship during their doctoral studies, as well as NSF grant DMS-2105512 and the Simons Foundation Award \#994330 (Simons Collaboration on New Structures in Low-Dimensional Topology).

\bibliography{Curvesbib.bib}

\end{document}